\begin{document}

\journalname{Name Your Journal}
\title{A Picard-S Iterative Scheme for Approximating Fixed Point of
Weak-Contraction Mappings}
\author{Faik G\"{U}RSOY}
\institute{Department of Mathematics, Yildiz Technical University, Davutpasa Campus,
Esenler, 34220 Istanbul, Turkey 
\email{faikgursoy02@hotmail.com}%
}
\offprints{Offprints Assistant}
\mail{Address for offprint requests}
\maketitle

\begin{abstract}
We study the convergence analysis of a Picard-S iteration method for a
particular class of weak-contraction mappings. Furthermore, we prove a data
dependence result for fixed point of the class of weak-contraction mappings
with the help of the Picard-S iteration methods.
\end{abstract}

\keywords{Picard-S iterative scheme -- Weak-Contraction mappings --
Convergence -- Rate of Convergence- Data Dependence }

\section{Introduction}

Most of the problems that arise in different disciplines of science can be
formulated by the equations in the form%
\begin{equation}
Fx=0\text{,}  \label{eqn1}
\end{equation}%
where $F$ is some function. The equations given by (1) can easily be
reformulated as the fixed point equations of type 
\begin{equation}
Tx=x\text{.}  \label{eqn2}
\end{equation}%
where $T$ is a self-map of an ambient space $X$ and $x\in X$. These
equations are often classified as linear or nonlinear, depending on whether
the mappings used in the equation is linear with respect to the variables.
Over the years, a considerable attention has been paid to solving such
equations by using different techniques such as direct and iterative
methods. In case of linear equations, both direct and iterative methods are
used to obtain solutions of the equations. But in case of nonlinear
equations, due to various reasons, direct methods can be impractical or fail
in solving equations, and thus iterative methods become a viable
alternative. Nonlinear problems are of importance interest to
mathematicians, physicists and engineers and many other scientists, simply
because most systems are intrinsically nonlinear in nature. That is why,
researchers in various disciplines of sciences are often faced with the
solving such problems. It would be hard to fudge that the role of iterative
approximation of fixed points have played in the recent progress of
nonlinear science. Indeed, the phrase iterative approximation has been
introduced to describe repetition-based researches into nonlinear problems
that are inaccessible to analytic methods. For this reason, the iterative
approximation of fixed points has become one of the major and basic tools in
the theory of equations, and as a result, numerous iterative methods have
been introduced or improved and studied for many years in detail from
various points of aspects by a wide audience of researchers, see, [1-14,
16-42, 44, 45].

In this paper, we show that a Picard-S iteration method \cite{Gursoy} can be
used to approximate fixed point of weak-contraction mappings. Also, we show
that this iteration method is equivalent and converges faster than CR
iteration method \cite{CR} for the aforementioned class of mappings.
Furthermore, by providing an example, it is shown that the Picard-S
iteration method converges faster than CR iteration method and hence also
faster than all Picard \cite{Picard}, Mann \cite{Mann}, Ishikawa \cite%
{Ishikawa}, Noor \cite{Noor}, SP \cite{SP}, S \cite{S} and some other
iteration methods in the existing literature when applied to
weak-contraction mappings. Finally, a data dependence result is proven for
fixed point of weak-contraction mappings with the help of the Picard-S
iteration method.

Throughout this paper the set of all positive integers and zero is shown by $%
\mathbb{N}
$. Let $B$ be a Banach space, $D$ be a nonempty closed convex subset of $B$
and $T$ a self-map of $D$. An element $x_{\ast }$ of $D$ is called a fixed
point of $T$ if and only if $Tx_{\ast }=x_{\ast }$. The set of all fixed
point of $T$ denoted by $F_{T}$. Let $\left\{ a_{n}^{i}\right\}
_{n=0}^{\infty }$, $i\in \left\{ 0,1,2\right\} $ be real sequences in $\left[
0,1\right] $ satisfying certain control condition(s).

Renowned Picard iteration method \cite{Picard} is formulated as follow%
\begin{equation}
\left\{ 
\begin{array}{c}
p_{0}\in D\text{, \ \ \ \ \ \ \ \ \ \ \ \ \ \ \ \ \ } \\ 
p_{n+1}=Tp_{n}\text{, }n\in 
\mathbb{N}
\text{,}%
\end{array}%
\right.   \label{eqn3}
\end{equation}%
and generally used to approximate fixed points of contraction mappings
satisfying: for all $x$, $y\in B$ there exists a $\delta \in \left(
0,1\right) $ such that%
\begin{equation}
\left\Vert Tx-Ty\right\Vert \leq \delta \left\Vert x-y\right\Vert \text{.}
\label{eqn4}
\end{equation}%
The following iteration methods are known as Noor \cite{Noor} and SP \cite%
{SP} iterations, respectively:%
\begin{equation}
\left\{ 
\begin{array}{c}
\omega _{0}\in D\text{, \ \ \ \ \ \ \ \ \ \ \ \ \ \ \ \ \ \ \ \ \ \ \ \ \ \
\ \ \ \ \ } \\ 
\omega _{n+1}=\left( 1-a_{n}^{0}\right) \omega _{n}+a_{n}^{0}T\varpi _{n}%
\text{, \ \ } \\ 
\varpi _{n}=\left( 1-a_{n}^{1}\right) \omega _{n}+a_{n}^{1}T\rho _{n}\text{, 
} \\ 
\rho _{n}=\left( 1-a_{n}^{2}\right) \omega _{n}+a_{n}^{2}T\omega _{n}\text{, 
}n\in 
\mathbb{N}
\text{,}%
\end{array}%
\right.   \label{eqn5}
\end{equation}%
\begin{equation}
\left\{ 
\begin{array}{c}
q_{0}\in D\text{,\ \ \ \ \ \ \ \ \ \ \ \ \ \ \ \ \ \ \ \ \ \ \ \ \ \ \ \ \ \
\ } \\ 
q_{n+1}=\left( 1-a_{n}^{0}\right) r_{n}+a_{n}^{0}Tr_{n}\text{, \ \ } \\ 
r_{n}=\left( 1-a_{n}^{1}\right) s_{n}+a_{n}^{1}Ts_{n}\text{,} \\ 
s_{n}=\left( 1-a_{n}^{2}\right) q_{n}+a_{n}^{2}Tq_{n}\text{, }n\in 
\mathbb{N}
\text{,}%
\end{array}%
\right.   \label{eqn6}
\end{equation}

\begin{remark}
(i) If $a_{n}^{2}=0$ for each $n\in 
\mathbb{N}
$ , then the Noor iteration method reduces to iterative method of Ishikawa 
\cite{Ishikawa}.

(ii) If $a_{n}^{2}=0$ for each $n\in 
\mathbb{N}
$ , then the SP iteration method reduces to iterative method of Thianwan 
\cite{iam}. 

(iii) When $a_{n}^{1}=$ $a_{n}^{2}=0$ for each $n\in 
\mathbb{N}
$, then both Noor and SP iteration methods reduce to an iteration method due
to Mann \cite{Mann}.  
\end{remark}

Recently, G\"{u}rsoy and Karakaya \cite{Gursoy} introduced a Picard-S
iterative scheme as follows: 
\begin{equation}
\left\{ 
\begin{array}{c}
x_{0}\in D\text{, \ \ \ \ \ \ \ \ \ \ \ \ \ \ \ \ \ \ \ \ \ \ \ \ \ \ \ \ \
\ \ \ \ \ \ \ \ \ \ \ } \\ 
x_{n+1}=Ty_{n}\text{, \ \ \ \ \ \ \ \ \ \ \ \ \ \ \ \ \ \ \ \ \ \ \ \ \ \ \
\ \ \ \ \ \ \ } \\ 
y_{n}=\left( 1-a_{n}^{1}\right) Tx_{n}+a_{n}^{1}Tz_{n}\text{, \ \ \ \ \ \ }
\\ 
z_{n}=\left( 1-a_{n}^{2}\right) x_{n}+a_{n}^{2}Tx_{n}\text{, }n\in 
\mathbb{N}
\text{,}%
\end{array}%
\right.   \label{eqn7}
\end{equation}%
The following definitions and lemmas will be needed in obtaining the main
results of this article.

\begin{definition}
\cite{Berinde} Let $\left\{ a_{n}\right\} _{n=0}^{\infty }$ and $\left\{
b_{n}\right\} _{n=0}^{\infty }$ be two sequences of real numbers with limits 
$a$ and $b$, respectively. Assume that there exists%
\begin{equation}
\underset{n\rightarrow \infty }{\lim }\frac{\left\vert a_{n}-a\right\vert }{%
\left\vert b_{n}-b\right\vert }=l\text{.}  \label{eqn8}
\end{equation}

(i) If $l=0$, the we say that $\left\{ a_{n}\right\} _{n=0}^{\infty }$
converges faster to $a$ than $\left\{ b_{n}\right\} _{n=0}^{\infty }$ to $b$.

(ii) If $0<l<\infty $, then we say that $\left\{ a_{n}\right\}
_{n=0}^{\infty }$ and $\left\{ b_{n}\right\} _{n=0}^{\infty }$ have the same
rate of convergence.
\end{definition}

\begin{definition}
\cite{Berinde} Assume that for two fixed point iteration processes $\left\{
u_{n}\right\} _{n=0}^{\infty }$ and $\left\{ v_{n}\right\} _{n=0}^{\infty }$
both converging to the same fixed point $p$, the following error predictions%
\begin{equation}
\left\Vert u_{n}-p\right\Vert \leq a_{n}\text{ for all }n\in 
\mathbb{N}
\text{,}  \label{eqn9}
\end{equation}%
\begin{equation}
\left\Vert v_{n}-p\right\Vert \leq b_{n}\text{ for all }n\in 
\mathbb{N}
\text{,}  \label{eqn10}
\end{equation}%
are available where $\left\{ a_{n}\right\} _{n=0}^{\infty }$ and $\left\{
b_{n}\right\} _{n=0}^{\infty }$ are two sequences of positive numbers
(converging to zero). If $\left\{ a_{n}\right\} _{n=0}^{\infty }$ converges
faster than $\left\{ b_{n}\right\} _{n=0}^{\infty }$, then $\left\{
u_{n}\right\} _{n=0}^{\infty }$ converges faster than $\left\{ v_{n}\right\}
_{n=0}^{\infty }$ to $p$.
\end{definition}

\begin{definition}
\cite{Vasile} Let $\left( B,\left\Vert \cdot \right\Vert \right) $ be a
Banach space. A map $T:B\rightarrow B$ is called weak-contraction if there
exist a constant $\delta \in \left( 0,1\right) $ and some $L\geq 0$ such that%
\begin{equation}
\left\Vert Tx-Ty\right\Vert \leq \delta \left\Vert x-y\right\Vert
+L\left\Vert y-Ty\right\Vert \text{, for all }x\text{, }y\in B\text{.}
\label{eqn11}
\end{equation}
\end{definition}

\begin{definition}
\cite{Vasile} Let $T$,$\widetilde{T}:B\rightarrow B$ be two operators. We
say that $\widetilde{T}$ is an approximate operator of $T$ if for all $x\in B
$ and for a fixed $\varepsilon >0$ we have 
\begin{equation}
\left\Vert Tx-\widetilde{T}x\right\Vert \leq \varepsilon .  \label{eqn12}
\end{equation}
\end{definition}

\begin{lemma}
\cite{Weng}Let $\left\{ \beta _{n}\right\} _{n=0}^{\infty }$ and $\left\{
\rho _{n}\right\} _{n=0}^{\infty }$ be nonnegative real sequences satisfying
the following inequality:%
\begin{equation}
\beta _{n+1}\leq \left( 1-\lambda _{n}\right) \beta _{n}+\rho _{n}\text{,}
\label{eqn13}
\end{equation}%
where $\lambda _{n}\in \left( 0,1\right) $, for all $n\geq n_{0}$, $%
\dsum\nolimits_{n=1}^{\infty }\lambda _{n}=\infty $, and $\frac{\rho _{n}}{%
\lambda _{n}}\rightarrow 0$ as $n\rightarrow \infty $. Then $%
\lim_{n\rightarrow \infty }\beta _{n}=0$.
\end{lemma}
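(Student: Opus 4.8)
The plan is to unwind the recursion \eqref{eqn13} and then estimate the two resulting pieces separately. Fix $n_{0}$ as in the hypothesis. Iterating the inequality $\beta_{k+1}\leq(1-\lambda_{k})\beta_{k}+\rho_{k}$ from $k=n_{0}$ up to $k=n$ gives, for every $n\geq n_{0}$,
\[
\beta_{n+1}\leq\left(\prod_{k=n_{0}}^{n}(1-\lambda_{k})\right)\beta_{n_{0}}+\sum_{k=n_{0}}^{n}\left(\prod_{j=k+1}^{n}(1-\lambda_{j})\right)\rho_{k},
\]
with the convention that an empty product equals $1$; I would establish this by a routine induction on $n$. The first term tends to $0$: since $\lambda_{k}\in(0,1)$, the elementary inequality $1-t\leq e^{-t}$ yields $\prod_{k=n_{0}}^{n}(1-\lambda_{k})\leq\exp\!\big(-\sum_{k=n_{0}}^{n}\lambda_{k}\big)$, and the divergence $\sum_{k=1}^{\infty}\lambda_{k}=\infty$ forces this bound to $0$ as $n\to\infty$; multiplying by the constant $\beta_{n_{0}}$ preserves the limit.

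The substantive part is the convolution sum. Given $\varepsilon>0$, I would use $\rho_{k}/\lambda_{k}\to0$ to pick $N\geq n_{0}$ with $\rho_{k}\leq\varepsilon\lambda_{k}$ for all $k\geq N$, and split the sum at $N$. For the head $\sum_{k=n_{0}}^{N-1}$, bound each inner product $\prod_{j=k+1}^{n}(1-\lambda_{j})$ by $\prod_{j=N}^{n}(1-\lambda_{j})$ (the discarded factors are $\leq1$), so the head is at most $\big(\prod_{j=N}^{n}(1-\lambda_{j})\big)\sum_{k=n_{0}}^{N-1}\rho_{k}$; with $N$ now frozen, the product goes to $0$ as $n\to\infty$ by the same reasoning as above while $\sum_{k=n_{0}}^{N-1}\rho_{k}$ is a fixed constant, so the head vanishes in the limit.

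For the tail, I would use the telescoping identity $\lambda_{k}\prod_{j=k+1}^{n}(1-\lambda_{j})=\prod_{j=k+1}^{n}(1-\lambda_{j})-\prod_{j=k}^{n}(1-\lambda_{j})$, which collapses $\sum_{k=N}^{n}\big(\prod_{j=k+1}^{n}(1-\lambda_{j})\big)\lambda_{k}$ to $1-\prod_{j=N}^{n}(1-\lambda_{j})\leq1$; hence $\sum_{k=N}^{n}\big(\prod_{j=k+1}^{n}(1-\lambda_{j})\big)\rho_{k}\leq\varepsilon$. Combining the three estimates gives $\limsup_{n\to\infty}\beta_{n+1}\leq\varepsilon$, and since $\varepsilon>0$ is arbitrary and $\beta_{n}\geq0$, we conclude $\lim_{n\to\infty}\beta_{n}=0$. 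The only delicate point is the bookkeeping in the convolution sum — obtaining the telescoping bound and recognizing that the head can be controlled only after $N$ has been fixed — while the rest is routine.
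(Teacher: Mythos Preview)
Your argument is correct: the iterated recursion, the $1-t\leq e^{-t}$ bound for the leading product, the head/tail split of the convolution sum, and the telescoping identity $\lambda_{k}\prod_{j=k+1}^{n}(1-\lambda_{j})=\prod_{j=k+1}^{n}(1-\lambda_{j})-\prod_{j=k}^{n}(1-\lambda_{j})$ are all standard and carried out cleanly. Note, however, that the paper does not supply its own proof of this lemma --- it is quoted as a known result from \cite{Weng} and used as a black box --- so there is no in-paper argument to compare against; your write-up is in line with the usual textbook proof of Weng's lemma.
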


\begin{lemma}
\cite{Data Is 2} Let $\left\{ \beta _{n}\right\} _{n=0}^{\infty }$ be a
nonnegative sequence for which one assumes there exists $n_{0}\in 
\mathbb{N}
$, such that for all $n\geq n_{0}$ one has satisfied the inequality 
\begin{equation}
\beta _{n+1}\leq \left( 1-\mu _{n}\right) \beta _{n}+\mu _{n}\gamma _{n}%
\text{,}  \label{eqn15}
\end{equation}%
where $\mu _{n}\in \left( 0,1\right) ,$ for all $n\in 
\mathbb{N}
$, $\sum\limits_{n=0}^{\infty }\mu _{n}=\infty $ and $\gamma _{n}\geq 0$, $%
\forall n\in 
\mathbb{N}
$. Then the following inequality holds 
\begin{equation}
0\leq \lim \sup_{n\rightarrow \infty }\beta _{n}\leq \lim \sup_{n\rightarrow
\infty }\gamma _{n}.  \label{eqn16}
\end{equation}
\end{lemma}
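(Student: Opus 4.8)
The plan is to reduce the assertion to the elementary fact that a product $\prod_{k}(1-\mu_k)$ vanishes as soon as $\mu_k\in(0,1)$ and $\sum_k\mu_k=\infty$; this follows at once from $1-\mu_k\le e^{-\mu_k}$, which gives $\prod_{k=N}^{n}(1-\mu_k)\le\exp\!\bigl(-\sum_{k=N}^{n}\mu_k\bigr)\to 0$ as $n\to\infty$ for any fixed $N$. Everything else is bookkeeping around the recursion \eqref{eqn15}.

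First I would set $s:=\limsup_{n\to\infty}\gamma_n$. If $s=+\infty$ there is nothing to prove, so I may assume $s<\infty$ and fix an arbitrary $\varepsilon>0$. By the definition of $\limsup$ there is an index $N\ge n_0$ with $\gamma_n\le s+\varepsilon$ for all $n\ge N$. Substituting this into \eqref{eqn15} and subtracting $s+\varepsilon$ from both sides yields, for every $n\ge N$,
\[
\beta_{n+1}-(s+\varepsilon)\ \le\ (1-\mu_n)\bigl(\beta_n-(s+\varepsilon)\bigr).
\]

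Next I would run a one-line induction. Put $c_N:=\max\{\beta_N-(s+\varepsilon),\,0\}$ and $c_{n+1}:=(1-\mu_n)c_n$ for $n\ge N$; then each $c_n\ge 0$. Because $1-\mu_n>0$, the displayed inequality shows that the bound $\beta_n-(s+\varepsilon)\le c_n$ passes from $n$ to $n+1$, and it holds at $n=N$ by construction, hence for all $n\ge N$. Since $c_n=c_N\prod_{k=N}^{n-1}(1-\mu_k)\to 0$ by the first paragraph, we get $\limsup_{n\to\infty}\beta_n\le s+\varepsilon$. Letting $\varepsilon\downarrow 0$ gives $\limsup_{n\to\infty}\beta_n\le s=\limsup_{n\to\infty}\gamma_n$, and $\limsup_{n\to\infty}\beta_n\ge 0$ is immediate from $\beta_n\ge 0$; together these are exactly \eqref{eqn16}.

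I do not expect a real obstacle. The only delicate point is the sign of $\beta_N-(s+\varepsilon)$: if one iterates the displayed inequality without first replacing this quantity by its positive part $c_N$, one multiplies a possibly negative number by $(1-\mu_n)$ and then has to argue separately that $\beta_n$ cannot stay above $s+\varepsilon$ — harmless, but messier. Introducing $c_N$ sidesteps the case distinction. One could instead deduce the result from the Weng-type lemma built on \eqref{eqn13}, applied to $\beta_n-(s+\varepsilon)$ after truncation at $N$, but the direct argument above is shorter and self-contained.
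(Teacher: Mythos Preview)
Your argument is correct. The reduction to the vanishing of $\prod_{k\ge N}(1-\mu_k)$ via $1-\mu_k\le e^{-\mu_k}$, together with the shift $\beta_n\mapsto\beta_n-(s+\varepsilon)$ and the positive-part trick $c_N=\max\{\beta_N-(s+\varepsilon),0\}$, is clean and airtight; the induction step uses only $1-\mu_n>0$, and the final $\varepsilon\downarrow 0$ passage is standard.

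As for comparison with the paper: there is nothing to compare. The paper does not prove this lemma; it merely quotes it from \cite{Data Is 2} (\c{S}oltuz and Grosan) and then invokes it in the proof of Theorem~4. So your write-up actually supplies content that the paper omits. Your closing remark that one could alternatively deduce the result from the Weng-type Lemma~1 (inequality \eqref{eqn13}) is apt: applying that lemma to the truncated sequence $\max\{\beta_n-(s+\varepsilon),0\}$ with $\rho_n=0$ would also work, but the direct product estimate you give is shorter and keeps the argument self-contained.
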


\section{Main Results}

\begin{theorem}
Let $T:D\rightarrow D$ be a weak-contraction map satisfying condition (11)
with $F_{T}\neq \emptyset $ and $\left\{ x_{n}\right\} _{n=0}^{\infty }$ an
iterative sequence defined by (7) with real sequences $\left\{
a_{n}^{i}\right\} _{n=0}^{\infty }$, $i\in \left\{ 1,2\right\} $ in $\left[
0,1\right] $ satisfying $\sum_{k=0}^{\infty }a_{k}^{1}a_{k}^{2}=\infty $.
Then $\ \left\{ x_{n}\right\} _{n=0}^{\infty }$ converges to a unique fixed
point $u^{\ast }$of $T$.
\end{theorem}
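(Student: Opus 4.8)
The plan is to first establish uniqueness of the fixed point, then prove convergence via a telescoping contraction estimate combined with Lemma 1 (the Weng-type lemma).

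\textbf{Uniqueness.} Suppose $u^{\ast}, v^{\ast} \in F_T$. Applying the weak-contraction condition (11) with $x = u^{\ast}$, $y = v^{\ast}$ gives $\left\Vert u^{\ast} - v^{\ast}\right\Vert = \left\Vert Tu^{\ast} - Tv^{\ast}\right\Vert \leq \delta \left\Vert u^{\ast} - v^{\ast}\right\Vert + L\left\Vert v^{\ast} - Tv^{\ast}\right\Vert = \delta \left\Vert u^{\ast} - v^{\ast}\right\Vert$, and since $\delta \in (0,1)$ this forces $u^{\ast} = v^{\ast}$. So $F_T = \{u^{\ast}\}$.

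\textbf{Convergence.} Fix the fixed point $u^{\ast}$. Working from the innermost relation of (7) outward, I would bound each layer. First, using (11) with $y = u^{\ast}$ (so the $L$-term vanishes because $u^{\ast} = Tu^{\ast}$), I get $\left\Vert Tx_n - u^{\ast}\right\Vert \leq \delta \left\Vert x_n - u^{\ast}\right\Vert$ and likewise $\left\Vert Tz_n - u^{\ast}\right\Vert \leq \delta \left\Vert z_n - u^{\ast}\right\Vert$. From $z_n = (1-a_n^2)x_n + a_n^2 Tx_n$ and convexity of the norm, $\left\Vert z_n - u^{\ast}\right\Vert \leq (1-a_n^2)\left\Vert x_n - u^{\ast}\right\Vert + a_n^2 \delta \left\Vert x_n - u^{\ast}\right\Vert = \left(1 - (1-\delta)a_n^2\right)\left\Vert x_n - u^{\ast}\right\Vert$. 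Next, from $y_n = (1-a_n^1)Tx_n + a_n^1 Tz_n$, $\left\Vert y_n - u^{\ast}\right\Vert \leq (1-a_n^1)\delta\left\Vert x_n - u^{\ast}\right\Vert + a_n^1 \delta \left(1 - (1-\delta)a_n^2\right)\left\Vert x_n - u^{\ast}\right\Vert = \delta\left(1 - (1-\delta)a_n^1 a_n^2\right)\left\Vert x_n - u^{\ast}\right\Vert$. Finally $\left\Vert x_{n+1} - u^{\ast}\right\Vert = \left\Vert Ty_n - u^{\ast}\right\Vert \leq \delta\left\Vert y_n - u^{\ast}\right\Vert \leq \delta^2\left(1 - (1-\delta)a_n^1 a_n^2\right)\left\Vert x_n - u^{\ast}\right\Vert$.

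\textbf{Conclusion.} Since $\delta^2 < 1$ and $1 - (1-\delta)a_n^1 a_n^2 \leq 1$, we have $\left\Vert x_{n+1} - u^{\ast}\right\Vert \leq \left(1 - (1-\delta)a_n^1 a_n^2\right)\left\Vert x_n - u^{\ast}\right\Vert$. Setting $\beta_n = \left\Vert x_n - u^{\ast}\right\Vert$, $\lambda_n = (1-\delta)a_n^1 a_n^2$, and $\rho_n = 0$, the hypotheses of Lemma 1 are met: $\lambda_n \in (0,1)$ (for the relevant $n$), $\sum_n \lambda_n = (1-\delta)\sum_n a_n^1 a_n^2 = \infty$ by assumption, and $\rho_n/\lambda_n \to 0$ trivially. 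Hence $\beta_n \to 0$, i.e. $x_n \to u^{\ast}$. Alternatively, one can simply iterate the inequality to obtain $\beta_{n+1} \leq \prod_{k=0}^{n}\left(1-(1-\delta)a_k^1 a_k^2\right)\beta_0$ and use the standard fact that a product $\prod(1-\lambda_k)$ with $\lambda_k \in [0,1)$ and $\sum \lambda_k = \infty$ diverges to $0$. I do not anticipate a serious obstacle here; the only point requiring mild care is handling indices $n$ for which $a_n^1 a_n^2$ could be $0$ (so $\lambda_n = 0$ momentarily), which is harmless since such terms contribute a factor $1$ to the product and Lemma 1 is stated for $n \geq n_0$; if one wants $\lambda_n \in (0,1)$ strictly one may pass to the subsequence of indices where $a_n^1 a_n^2 > 0$, whose $\lambda$-sum is still infinite.
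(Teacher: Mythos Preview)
Your proof is correct and follows essentially the same route as the paper: you derive the identical recursive estimate $\left\Vert x_{n+1}-u^{\ast}\right\Vert \leq \delta^{2}\bigl(1-(1-\delta)a_{n}^{1}a_{n}^{2}\bigr)\left\Vert x_{n}-u^{\ast}\right\Vert$ via the same three-layer bound on $z_n$, $y_n$, $x_{n+1}$. The only cosmetic difference is in the final step: the paper iterates and bounds the product by $e^{-(1-\delta)\sum a_k^1 a_k^2}$, whereas you invoke Lemma~1 (or the equivalent infinite-product fact), which is equally valid.
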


\begin{proof}
Uniqueness of $u^{\ast }$ comes from condition (11). Using Picard-S
iterative scheme (7) and condition (11), we obtain%
\begin{eqnarray}
\left\Vert z_{n}-u^{\ast }\right\Vert  &\leq &\left( 1-a_{n}^{2}\right)
\left\Vert x_{n}-u^{\ast }\right\Vert +a_{n}^{2}\left\Vert Tx_{n}-Tu^{\ast
}\right\Vert   \notag \\
&\leq &\left( 1-a_{n}^{2}\right) \left\Vert x_{n}-u^{\ast }\right\Vert
+a_{n}^{2}\delta \left\Vert x_{n}-u^{\ast }\right\Vert +a_{n}^{2}L\left\Vert
u^{\ast }-Tu^{\ast }\right\Vert   \notag \\
&=&\left[ 1-a_{n}^{2}\left( 1-\delta \right) \right] \left\Vert
x_{n}-u^{\ast }\right\Vert \text{,}  \label{eqn17}
\end{eqnarray}%
\begin{eqnarray}
\left\Vert y_{n}-u^{\ast }\right\Vert  &\leq &\left( 1-a_{n}^{1}\right)
\left\Vert Tx_{n}-Tu^{\ast }\right\Vert +a_{n}^{1}\left\Vert Tz_{n}-Tu^{\ast
}\right\Vert   \notag \\
&\leq &\left( 1-a_{n}^{1}\right) \delta \left\Vert x_{n}-u^{\ast
}\right\Vert +a_{n}^{1}\delta \left\Vert z_{n}-u^{\ast }\right\Vert \text{,}
\label{eqn18}
\end{eqnarray}%
\begin{equation}
\left\Vert x_{n+1}-u^{\ast }\right\Vert \leq \delta \left\Vert y_{n}-u^{\ast
}\right\Vert \text{.}  \label{eqn19}
\end{equation}%
Combining (16), (17) and (18)%
\begin{equation}
\left\Vert x_{n+1}-u^{\ast }\right\Vert \leq \delta ^{2}\left[
1-a_{n}^{1}a_{n}^{2}\left( 1-\delta \right) \right] \left\Vert x_{n}-u^{\ast
}\right\Vert \text{.}  \label{eqn20}
\end{equation}%
By induction%
\begin{eqnarray}
\left\Vert x_{n+1}-u^{\ast }\right\Vert  &\leq &\delta ^{2\left( n+1\right)
}\dprod\nolimits_{k=0}^{n}\left[ 1-a_{k}^{1}a_{k}^{2}\left( 1-\delta \right) %
\right] \left\Vert x_{0}-u^{\ast }\right\Vert   \notag \\
&\leq &\delta ^{2\left( n+1\right) }\left\Vert x_{0}-u^{\ast }\right\Vert
^{n+1}e^{-\left( 1-\delta \right) \dsum\nolimits_{k=0}^{n}a_{k}^{1}a_{k}^{2}}%
\text{.}  \label{eqn21}
\end{eqnarray}%
Since $\sum_{k=0}^{\infty }a_{k}^{1}a_{k}^{2}=\infty $,%
\begin{equation}
e^{-\left( 1-\delta \right)
\dsum\nolimits_{k=0}^{n}a_{k}^{1}a_{k}^{2}}\rightarrow 0\text{ as }%
n\rightarrow \infty \text{,}  \label{eqn22}
\end{equation}%
which implies $\lim_{n\rightarrow \infty }\left\Vert x_{n}-u^{\ast
}\right\Vert $.
\end{proof}

\begin{theorem}
Let $T:D\rightarrow D$ with fixed point $u^{\ast }\in F_{T}\neq \emptyset $
be as in Theorem 1 and $\{q_{n}\}_{n=0}^{\infty }$, $\{x_{n}\}_{n=0}^{\infty
}$ two iterative sequences defined by SP (6) and Picard-S (7) iteration
methods with real sequences $\left\{ a_{n}^{i}\right\} _{n=0}^{\infty }$, $%
i\in \left\{ 0,1,2\right\} $ in $\left[ 0,1\right] $ satisfying $%
\sum_{k=0}^{n}a_{k}^{1}a_{k}^{2}=\infty $. Then the following are equivalent:

(i) $\lim_{n\rightarrow \infty }\left\Vert x_{n}-u^{\ast }\right\Vert =0$;

(ii) $\lim_{n\rightarrow \infty }\left\Vert q_{n}-u^{\ast }\right\Vert =0$.

\begin{proof}
(i)$\Rightarrow $(ii): It follows from (6), (7), and condition (11) that%
\begin{eqnarray}
\left\Vert x_{n+1}-q_{n+1}\right\Vert  &=&\left\Vert \left(
1-a_{n}^{0}\right) \left( Ty_{n}-r_{n}\right) +a_{n}^{0}\left(
Ty_{n}-Tr_{n}\right) \right\Vert   \label{eqn23} \\
&\leq &\left( 1-a_{n}^{0}\right) \left\Vert Ty_{n}-r_{n}\right\Vert
+a_{n}^{0}\left\Vert Ty_{n}-Tr_{n}\right\Vert   \notag \\
&\leq &\left[ 1-a_{n}^{0}\left( 1-\delta \right) \right] \left\Vert
y_{n}-r_{n}\right\Vert +\left[ 1-a_{n}^{0}\left( 1-L\right) \right]
\left\Vert y_{n}-Ty_{n}\right\Vert \text{,}  \notag
\end{eqnarray}%
\begin{eqnarray}
\left\Vert y_{n}-r_{n}\right\Vert  &=&\left\Vert \left( 1-a_{n}^{1}\right)
\left( Tx_{n}-s_{n}\right) +a_{n}^{1}\left( Tz_{n}-Ts_{n}\right) \right\Vert 
\label{eqn24} \\
&\leq &\left( 1-a_{n}^{1}\right) \left\Vert Tx_{n}-s_{n}\right\Vert
+a_{n}^{1}\left\Vert Tz_{n}-Ts_{n}\right\Vert   \notag \\
&\leq &\left( 1-a_{n}^{1}\right) \left\Vert Tx_{n}-s_{n}\right\Vert
+a_{n}^{1}\delta \left\Vert z_{n}-s_{n}\right\Vert +a_{n}^{1}L\left\Vert
z_{n}-Tz_{n}\right\Vert \text{,}  \notag
\end{eqnarray}%
\begin{eqnarray}
\left\Vert Tx_{n}-s_{n}\right\Vert  &=&\left\Vert \left( 1-a_{n}^{2}\right)
\left( Tx_{n}-q_{n}\right) +a_{n}^{2}\left( Tx_{n}-Tq_{n}\right) \right\Vert 
\label{eqn25} \\
&\leq &\left[ 1-a_{n}^{2}\left( 1-\delta \right) \right] \left\Vert
x_{n}-q_{n}\right\Vert +\left[ 1-a_{n}^{2}\left( 1-L\right) \right]
\left\Vert x_{n}-Tx_{n}\right\Vert \text{,}  \notag
\end{eqnarray}%
\begin{eqnarray}
\left\Vert z_{n}-s_{n}\right\Vert  &\leq &\left( 1-a_{n}^{2}\right)
\left\Vert x_{n}-q_{n}\right\Vert +a_{n}^{2}\left\Vert
Tx_{n}-Tq_{n}\right\Vert   \label{eqn26} \\
&\leq &\left[ 1-a_{n}^{2}\left( 1-\delta \right) \right] \left\Vert
x_{n}-q_{n}\right\Vert +a_{n}^{2}L\left\Vert x_{n}-Tx_{n}\right\Vert \text{.}
\notag
\end{eqnarray}%
Combining (22), (23), (24), and (25)%
\begin{eqnarray}
\left\Vert x_{n+1}-q_{n+1}\right\Vert  &\leq &\left[ 1-a_{n}^{0}\left(
1-\delta \right) \right] \left[ 1-a_{n}^{1}\left( 1-\delta \right) \right] %
\left[ 1-a_{n}^{2}\left( 1-\delta \right) \right] \left\Vert
x_{n}-q_{n}\right\Vert   \label{eqn27} \\
&&+\left[ 1-a_{n}^{0}\left( 1-\delta \right) \right] \left\{ \left(
1-a_{n}^{1}\right) \left[ 1-a_{n}^{2}\left( 1-L\right) \right]
+a_{n}^{1}a_{n}^{2}\delta L\right\} \left\Vert x_{n}-Tx_{n}\right\Vert  
\notag \\
&&+\left[ 1-a_{n}^{0}\left( 1-\delta \right) \right] a_{n}^{1}L\left\Vert
z_{n}-Tz_{n}\right\Vert +\left[ 1-a_{n}^{0}\left( 1-L\right) \right]
\left\Vert y_{n}-Ty_{n}\right\Vert \text{.}  \notag
\end{eqnarray}%
It follows from the facts $\delta \in \left( 0,1\right) $ and $a_{n}^{i}\in %
\left[ 0,1\right] $, $\forall n\in 
\mathbb{N}
$, $i\in \left\{ 0,1,2\right\} $ that%
\begin{equation}
\left[ 1-a_{n}^{0}\left( 1-\delta \right) \right] \left[ 1-a_{n}^{1}\left(
1-\delta \right) \right] \left[ 1-a_{n}^{2}\left( 1-\delta \right) \right]
<1-a_{n}^{1}a_{n}^{2}\left( 1-\delta \right) \text{.}  \label{eqn28}
\end{equation}%
Hence, inequality (26) becomes%
\begin{eqnarray}
\left\Vert x_{n+1}-q_{n+1}\right\Vert  &\leq &\left[ 1-a_{n}^{1}a_{n}^{2}%
\left( 1-\delta \right) \right] \left\Vert x_{n}-q_{n}\right\Vert 
\label{eqn29} \\
&&+\left[ 1-a_{n}^{0}\left( 1-\delta \right) \right] \left\{ \left(
1-a_{n}^{1}\right) \left[ 1-a_{n}^{2}\left( 1-L\right) \right]
+a_{n}^{1}a_{n}^{2}\delta L\right\} \left\Vert x_{n}-Tx_{n}\right\Vert  
\notag \\
&&+\left[ 1-a_{n}^{0}\left( 1-\delta \right) \right] a_{n}^{1}L\left\Vert
z_{n}-Tz_{n}\right\Vert +\left[ 1-a_{n}^{0}\left( 1-L\right) \right]
\left\Vert y_{n}-Ty_{n}\right\Vert \text{.}  \notag
\end{eqnarray}%
Denote that%
\begin{eqnarray}
\beta _{n} &:&=\left\Vert x_{n}-q_{n}\right\Vert \text{, \ \ }  \label{eqn30}
\\
\lambda _{n} &:&=a_{n}^{1}a_{n}^{2}\left( 1-\delta \right) \in \left(
0,1\right) \text{, \ }  \notag \\
\rho _{n} &:&=\left[ 1-a_{n}^{0}\left( 1-\delta \right) \right] \left\{
\left( 1-a_{n}^{1}\right) \left[ 1-a_{n}^{2}\left( 1-L\right) \right]
+a_{n}^{1}a_{n}^{2}\delta L\right\} \left\Vert x_{n}-Tx_{n}\right\Vert  
\notag \\
&&+\left[ 1-a_{n}^{0}\left( 1-\delta \right) \right] a_{n}^{1}L\left\Vert
z_{n}-Tz_{n}\right\Vert +\left[ 1-a_{n}^{0}\left( 1-L\right) \right]
\left\Vert y_{n}-Ty_{n}\right\Vert \text{.}  \notag
\end{eqnarray}%
Since $\lim_{n\rightarrow \infty }\left\Vert x_{n}-u^{\ast }\right\Vert =0$
and $Tu^{\ast }=u^{\ast }$%
\begin{equation}
\lim_{n\rightarrow \infty }\left\Vert x_{n}-Tx_{n}\right\Vert
=\lim_{n\rightarrow \infty }\left\Vert y_{n}-Ty_{n}\right\Vert
=\lim_{n\rightarrow \infty }\left\Vert z_{n}-Tz_{n}\right\Vert =0\text{,}
\label{eqn31}
\end{equation}%
which implies $\frac{\rho _{n}}{\lambda _{n}}\rightarrow 0$ as $n\rightarrow
\infty $. Therefore, inequality (28) perform all assumptions in Lemma 1 and
thus we obtain $\lim_{n\rightarrow \infty }\left\Vert x_{n}-q_{n}\right\Vert
=0$. Since%
\begin{equation}
\left\Vert q_{n}-u^{\ast }\right\Vert \leq \left\Vert x_{n}-q_{n}\right\Vert
+\left\Vert x_{n}-u^{\ast }\right\Vert \rightarrow 0\text{ as }n\rightarrow
\infty \text{,}  \label{eqn32}
\end{equation}%
$\lim_{n\rightarrow \infty }\left\Vert q_{n}-u^{\ast }\right\Vert =0$.

(ii)$\Rightarrow $(i): It follows from (6), (7), and condition (11) that%
\begin{eqnarray}
\left\Vert q_{n+1}-x_{n+1}\right\Vert  &=&\left\Vert
r_{n}-Ty_{n}+a_{n}^{0}\left( Tr_{n}-r_{n}\right) \right\Vert   \label{eqn33}
\\
&\leq &\delta \left\Vert r_{n}-y_{n}\right\Vert +\left( 1+a_{n}^{0}+L\right)
\left\Vert r_{n}-Tr_{n}\right\Vert \text{,}  \notag
\end{eqnarray}%
\begin{eqnarray}
\left\Vert r_{n}-y_{n}\right\Vert  &\leq &\left( 1-a_{n}^{1}\right)
\left\Vert s_{n}-Tx_{n}\right\Vert +a_{n}^{1}\left\Vert
Ts_{n}-Tz_{n}\right\Vert   \label{eqn34} \\
&\leq &\left( 1-a_{n}^{1}\right) \left\Vert s_{n}-Tx_{n}\right\Vert
+a_{n}^{1}\delta \left\Vert s_{n}-z_{n}\right\Vert +a_{n}^{1}L\left\Vert
s_{n}-Ts_{n}\right\Vert \text{,}  \notag
\end{eqnarray}%
\begin{eqnarray}
\left\Vert s_{n}-Tx_{n}\right\Vert  &\leq &\left\Vert
Ts_{n}-Tx_{n}\right\Vert +\left\Vert s_{n}-Ts_{n}\right\Vert   \label{eqn35}
\\
&\leq &\delta \left\Vert s_{n}-x_{n}\right\Vert +\left( 1+L\right)
\left\Vert s_{n}-Ts_{n}\right\Vert   \notag \\
&\leq &\delta \left\Vert q_{n}-x_{n}\right\Vert +\delta a_{n}^{2}\left\Vert
Tq_{n}-q_{n}\right\Vert +\left( 1+L\right) \left\Vert
s_{n}-Ts_{n}\right\Vert \text{,}  \notag
\end{eqnarray}%
\begin{eqnarray}
\left\Vert s_{n}-z_{n}\right\Vert  &\leq &\left( 1-a_{n}^{2}\right)
\left\Vert q_{n}-x_{n}\right\Vert +a_{n}^{2}\left\Vert
Tq_{n}-Tx_{n}\right\Vert   \label{eqn36} \\
&\leq &\left[ 1-a_{n}^{2}\left( 1-\delta \right) \right] \left\Vert
q_{n}-x_{n}\right\Vert +a_{n}^{2}L\left\Vert q_{n}-Tq_{n}\right\Vert \text{.}
\notag
\end{eqnarray}%
Combining (32), (33), (34), and (35)%
\begin{eqnarray}
\left\Vert q_{n+1}-x_{n+1}\right\Vert  &\leq &\delta ^{2}\left[
1-a_{n}^{1}a_{n}^{2}\left( 1-\delta \right) \right] \left\Vert
q_{n}-x_{n}\right\Vert   \label{eqn37} \\
&&+\delta ^{2}a_{n}^{2}\left[ 1-a_{n}^{1}\left( 1-L\right) \right]
\left\Vert q_{n}-Tq_{n}\right\Vert   \notag \\
&&+\left( 1+a_{n}^{0}+L\right) \left\Vert r_{n}-Tr_{n}\right\Vert +\delta
\left( 1-a_{n}^{1}+L\right) \left\Vert s_{n}-Ts_{n}\right\Vert \text{.} 
\notag
\end{eqnarray}%
Since $\delta \in \left( 0,1\right) $%
\begin{equation}
\delta ^{2}\left[ 1-a_{n}^{1}a_{n}^{2}\left( 1-\delta \right) \right]
<1-a_{n}^{1}a_{n}^{2}\left( 1-\delta \right) \text{.}  \label{eqn38}
\end{equation}%
Hence, inequality (36) becomes%
\begin{eqnarray}
\left\Vert q_{n+1}-x_{n+1}\right\Vert  &\leq &\left[ 1-a_{n}^{1}a_{n}^{2}%
\left( 1-\delta \right) \right] \left\Vert q_{n}-x_{n}\right\Vert 
\label{eqn39} \\
&&+\delta ^{2}a_{n}^{2}\left[ 1-a_{n}^{1}\left( 1-L\right) \right]
\left\Vert q_{n}-Tq_{n}\right\Vert   \notag \\
&&+\left( 1+a_{n}^{0}+L\right) \left\Vert r_{n}-Tr_{n}\right\Vert +\delta
\left( 1-a_{n}^{1}+L\right) \left\Vert s_{n}-Ts_{n}\right\Vert \text{.} 
\notag
\end{eqnarray}%
Denote that%
\begin{eqnarray}
\beta _{n} &:&=\left\Vert q_{n}-x_{n}\right\Vert \text{, \ \ }  \label{eqn40}
\\
\lambda _{n} &:&=a_{n}^{1}a_{n}^{2}\left( 1-\delta \right) \in \left(
0,1\right) \text{, \ }  \notag \\
\rho _{n} &:&=\delta ^{2}a_{n}^{2}\left[ 1-a_{n}^{1}\left( 1-L\right) \right]
\left\Vert q_{n}-Tq_{n}\right\Vert   \notag \\
&&+\left( 1+a_{n}^{0}+L\right) \left\Vert r_{n}-Tr_{n}\right\Vert +\delta
\left( 1-a_{n}^{1}+L\right) \left\Vert s_{n}-Ts_{n}\right\Vert \text{.} 
\notag
\end{eqnarray}%
Since $\lim_{n\rightarrow \infty }\left\Vert q_{n}-u^{\ast }\right\Vert =0$
and $Tu^{\ast }=u^{\ast }$%
\begin{equation}
\lim_{n\rightarrow \infty }\left\Vert q_{n}-Tq_{n}\right\Vert
=\lim_{n\rightarrow \infty }\left\Vert r_{n}-Tr_{n}\right\Vert
=\lim_{n\rightarrow \infty }\left\Vert s_{n}-Ts_{n}\right\Vert =0\text{,}
\label{eqn41}
\end{equation}%
which implies $\frac{\rho _{n}}{\lambda _{n}}\rightarrow 0$ as $n\rightarrow
\infty $. Therefore, inequality (38) perform all assumptions in Lemma 1 and
thus we obtain $\lim_{n\rightarrow \infty }\left\Vert q_{n}-x_{n}\right\Vert
=0$. Since%
\begin{equation}
\left\Vert x_{n}-u^{\ast }\right\Vert \leq \left\Vert q_{n}-x_{n}\right\Vert
+\left\Vert q_{n}-u^{\ast }\right\Vert \rightarrow 0\text{ as }n\rightarrow
\infty \text{,}  \label{eqn42}
\end{equation}%
$\lim_{n\rightarrow \infty }\left\Vert x_{n}-u^{\ast }\right\Vert =0$.
\end{proof}
\end{theorem}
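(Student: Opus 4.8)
The plan is to prove the two implications separately, with the same machinery in each direction: manufacture a recursion $\beta_{n+1}\le(1-\lambda_n)\beta_n+\rho_n$ fitting the hypotheses of Lemma 1, where $\beta_n$ measures the distance between the two schemes' iterates at stage $n$, $\lambda_n:=a_n^1a_n^2(1-\delta)\in(0,1)$ (so $\sum\lambda_n=\infty$ by assumption), and $\rho_n$ collects displacement terms of the form $\|w-Tw\|$ that will be driven to zero by the assumed convergence of the ``known'' sequence together with the Lipschitz-type bound built into (11). A triangle inequality then transfers the convergence from the ``known'' sequence to the other.

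For (i)$\Rightarrow$(ii) I would set $\beta_n:=\|x_n-q_n\|$ and peel the two schemes one layer at a time. Write $x_{n+1}-q_{n+1}=Ty_n-(1-a_n^0)r_n-a_n^0Tr_n=(1-a_n^0)(Ty_n-r_n)+a_n^0(Ty_n-Tr_n)$ and apply (11) to $\|Ty_n-Tr_n\|$; this bounds $\|x_{n+1}-q_{n+1}\|$ by a multiple of $\|y_n-r_n\|$ plus a displacement term in $y_n$. Next estimate $\|y_n-r_n\|$ via $y_n=(1-a_n^1)Tx_n+a_n^1Tz_n$ and $r_n=(1-a_n^1)s_n+a_n^1Ts_n$ in terms of $\|Tx_n-s_n\|$ and $\|z_n-s_n\|$, and then estimate each of these in terms of $\|x_n-q_n\|$ using $z_n=(1-a_n^2)x_n+a_n^2Tx_n$ and $s_n=(1-a_n^2)q_n+a_n^2Tq_n$, each application of (11) spawning extra $\|x_n-Tx_n\|$, $\|z_n-Tz_n\|$, $\|y_n-Ty_n\|$ terms. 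Composing the layers yields $\|x_{n+1}-q_{n+1}\|\le\big[\prod_i(1-a_n^i(1-\delta))\big]\|x_n-q_n\|+(\text{displacement terms})$; since each factor is $\le1$, the product is $\le1-a_n^1a_n^2(1-\delta)$, giving exactly $\beta_{n+1}\le(1-\lambda_n)\beta_n+\rho_n$. Because $x_n\to u^{\ast}=Tu^{\ast}$, condition (11) forces $\|x_n-Tx_n\|,\|y_n-Ty_n\|,\|z_n-Tz_n\|\to0$, hence $\rho_n/\lambda_n\to0$; Lemma 1 then gives $\beta_n\to0$, and $\|q_n-u^{\ast}\|\le\|x_n-q_n\|+\|x_n-u^{\ast}\|\to0$.

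For (ii)$\Rightarrow$(i) I would symmetrically take $\beta_n:=\|q_n-x_n\|$ and run the mirror-image peeling, starting from $q_{n+1}=r_n+a_n^0(Tr_n-r_n)$ against $x_{n+1}=Ty_n$ and descending through $\|r_n-y_n\|$, $\|s_n-Tx_n\|$, $\|s_n-z_n\|$; this time the leading coefficient on $\beta_n$ carries a factor $\delta^2$, which I would absorb using $\delta<1$ to again reach $\beta_{n+1}\le[1-a_n^1a_n^2(1-\delta)]\beta_n+\rho_n$, now with $\rho_n$ built from $\|q_n-Tq_n\|,\|r_n-Tr_n\|,\|s_n-Ts_n\|$, all of which vanish since $q_n\to u^{\ast}$. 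Lemma 1 and one more triangle inequality close the argument. The main obstacle is the bookkeeping in the chain of estimates: (11) is not symmetric, so at each step the displacement term must be attached to whichever vector plays the role of the second argument of (11), and one must check that every coefficient multiplying $\beta_n$ really is $\le1$ before coarsening the product down to $1-\lambda_n$. Beyond this and the single use of Lemma 1 per direction, the only substantive input is that $F_T\neq\emptyset$ together with the assumed convergence of the given sequence, which is precisely what forces the displacement terms, and hence $\rho_n/\lambda_n$, to zero.
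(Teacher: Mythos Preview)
Your proposal is correct and follows essentially the same route as the paper: the same layer-by-layer peeling of the two schemes, the same recursion $\beta_{n+1}\le(1-\lambda_n)\beta_n+\rho_n$ with $\lambda_n=a_n^1a_n^2(1-\delta)$, the same displacement terms in each direction (those built from $x_n,y_n,z_n$ for (i)$\Rightarrow$(ii) and from $q_n,r_n,s_n$ for (ii)$\Rightarrow$(i)), the same observation that the reverse direction picks up a $\delta^2$ factor absorbed by $\delta<1$, and the same appeal to Lemma~1 followed by a triangle inequality. Your remark on the asymmetry of condition~(11) and on checking that each coefficient on $\beta_n$ is at most~$1$ before coarsening matches exactly the care the paper takes.
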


Taking R. Chugh et al.'s result (\cite{CR}, Corollary 3.2) into account,
Theorem 2 leads to the following corollary under weaker assumption:

\begin{corollary}
Let $T:D\rightarrow D$ with fixed point $u^{\ast }\in F_{T}\neq \emptyset $
be as in Theorem 1. Then the followings are equivalent:

1)The Picard iteration method (3) converges to $u^{\ast }$,

2) The Mann iteration method \cite{Mann} converges to $u^{\ast }$,

3) The Ishikawa iteration method \cite{Ishikawa} converges to $u^{\ast }$,

4) The Noor iteration method (5) converges to $u^{\ast }$,

5) S-iteration method \cite{S} converges to $u^{\ast }$,

6) The SP-iteration method (6) converges to $u^{\ast }$,

7) CR-iteration method \cite{CR} converges to $u^{\ast }$,

8) The Picard-S iteration method (7) converges to $u^{\ast }$.
\end{corollary}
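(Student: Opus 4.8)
The plan is to obtain Corollary 1 as a purely formal consequence of Theorem 2 together with the already-established equivalences among the classical iterations, using only transitivity of the relation ``converges to $u^{\ast}$''. The starting point is R. Chugh et al. (\cite{CR}, Corollary 3.2), which asserts that for a weak-contraction map $T$ satisfying (11) with $F_{T}\neq\emptyset$ the convergence to $u^{\ast}$ of the Picard (3), Mann, Ishikawa, Noor (5), S, SP (6) and CR iterations are pairwise equivalent. Thus statements (1)--(7) of the corollary are already mutually equivalent, and the whole task reduces to attaching statement (8), the Picard-S iteration (7), to this chain at a single link.

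To make that link, I would apply Theorem 2 with the control condition $\sum_{k=0}^{\infty}a_{k}^{1}a_{k}^{2}=\infty$: it gives exactly that $\lim_{n\to\infty}\|x_{n}-u^{\ast}\|=0$ holds if and only if $\lim_{n\to\infty}\|q_{n}-u^{\ast}\|=0$, i.e. (8) $\Leftrightarrow$ (6). Chaining this with the equivalence of (6) with each of (1)--(5) and (7) then yields that all eight statements (1)--(8) are equivalent, which is the conclusion. No estimate beyond those already established in Theorem 2 and in \cite{CR} is needed; the argument is entirely a matter of composing equivalences.

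The one point demanding attention — and what I expect to be the real (if modest) obstacle — is the bookkeeping of the admissible ranges and control conditions for the sequences $\{a_{n}^{i}\}$. One must verify that the hypotheses under which \cite{CR}, Corollary 3.2 is valid are compatible with the single hypothesis $\sum a_{k}^{1}a_{k}^{2}=\infty$ used in Theorem 2, either by exhibiting one admissible choice of $\{a_{n}^{i}\}$ satisfying both sets of requirements, or — preferably — by observing that $\sum a_{k}^{1}a_{k}^{2}=\infty$ is genuinely weaker than the divergence conditions imposed in \cite{CR}. This is precisely the content of the phrase ``under weaker assumption'' that precedes the corollary, so once that comparison of hypotheses is recorded, the proof is complete.
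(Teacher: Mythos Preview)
Your proposal is correct and matches the paper's approach exactly: the paper does not give a separate proof but simply records that the corollary follows from combining Theorem 2 with R. Chugh et al.'s result (\cite{CR}, Corollary 3.2), noting the weaker hypothesis. Your identification of the single link $(8)\Leftrightarrow(6)$ via Theorem 2 and your remark on the control-sequence bookkeeping are precisely what the paper's one-line justification ``under weaker assumption'' encapsulates.
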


\begin{theorem}
Let $T:D\rightarrow D$ with fixed point $u^{\ast }\in F_{T}\neq \emptyset $
be as in Theorem 1. Suppose that $\left\{ \omega _{n}\right\} _{n=0}^{\infty
}$, $\left\{ q_{n}\right\} _{n=0}^{\infty }$ and $\left\{ x_{n}\right\}
_{n=0}^{\infty }$ are iterative sequences, respectively, defined by Noor
(5), SP (6) and Picard-S (7) iterative schemes with real sequences $\left\{
a_{n}^{i}\right\} _{n=0}^{\infty }\subset \left[ 0,1\right] $, $i\in \left\{
0\text{,}1\text{,}2\right\} $ satisfying

(i) $0\leq a_{n}^{i}<\frac{1}{1+\delta }$,

(ii) $\lim_{n\rightarrow \infty }a_{n}^{i}=0$.

Then the iterative sequence defined by (7) converges faster than the
iterative sequences defined by (5) and (6) to a unique fixed point of $T$,
provided that the initial point is the same for all iterations.
\end{theorem}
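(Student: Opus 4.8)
The plan is to apply Definition~1 to the real error sequences $\{\|x_n-u^*\|\}$ and $\{\|\omega_n-u^*\|\}$ (and $\{\|q_n-u^*\|\}$): I will show that each of the ratios $\|x_n-u^*\|/\|\omega_n-u^*\|$ and $\|x_n-u^*\|/\|q_n-u^*\|$ tends to $0$, which by Definitions~1--2 is exactly the assertion that (7) converges faster than (5) and (6). The ingredients are an \emph{upper} geometric estimate for the Picard--S error, a \emph{lower} estimate for the Noor and SP errors in which condition (i) keeps every factor strictly positive, and condition (ii) to make the genuine geometric decay of (7) dominate in the quotient. Note that since $Tu^*=u^*$, condition (11) gives $\|Tw-Tu^*\|\le\delta\|w-u^*\|$ for every $w$ (the $L$-term vanishes), so $L$ plays no role anywhere below.

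First, exactly as in the proof of Theorem~1 one has $\|x_{n+1}-u^*\|\le\delta^{2}[1-a_n^{1}a_n^{2}(1-\delta)]\|x_n-u^*\|\le\delta^{2}\|x_n-u^*\|$, whence $\|x_n-u^*\|\le\delta^{2n}\|x_0-u^*\|$. We may assume $\|x_0-u^*\|>0$; otherwise all three iterations are identically $u^*$ and there is nothing to prove.

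Next I would bound the Noor error from below. From $\rho_n=(1-a_n^{2})\omega_n+a_n^{2}T\omega_n$ one gets $\|\rho_n-u^*\|\le[1-a_n^{2}(1-\delta)]\|\omega_n-u^*\|\le\|\omega_n-u^*\|$, and then $\|\varpi_n-u^*\|\le[1-a_n^{1}(1-\delta)]\|\omega_n-u^*\|\le\|\omega_n-u^*\|$. Applying the reverse triangle inequality to $\omega_{n+1}=(1-a_n^{0})\omega_n+a_n^{0}T\varpi_n$ together with $\|T\varpi_n-Tu^*\|\le\delta\|\varpi_n-u^*\|\le\delta\|\omega_n-u^*\|$ yields $\|\omega_{n+1}-u^*\|\ge[1-a_n^{0}(1+\delta)]\|\omega_n-u^*\|$; here condition (i), $a_n^{0}<\frac{1}{1+\delta}$, is precisely what forces $1-a_n^{0}(1+\delta)>0$ (and what legitimises dropping the absolute value in the reverse triangle inequality). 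Iterating, $\|\omega_n-u^*\|\ge\prod_{k=0}^{n-1}[1-a_k^{0}(1+\delta)]\,\|x_0-u^*\|>0$. The same three-stage computation for (6) gives $\|q_n-u^*\|\ge\prod_{k=0}^{n-1}\prod_{i=0}^{2}[1-a_k^{i}(1+\delta)]\,\|x_0-u^*\|>0$, all factors positive by (i).

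Finally, with the common initial point these estimates give $\dfrac{\|x_n-u^*\|}{\|\omega_n-u^*\|}\le\prod_{k=0}^{n-1}\dfrac{\delta^{2}}{1-a_k^{0}(1+\delta)}$ and the analogous inequality with a product of three such factors for SP. By condition (ii), $a_k^{i}\to0$, so for any $\varepsilon>0$ there is an $N$ with $1-a_k^{i}(1+\delta)>1-\varepsilon$ for all $k\ge N$; choosing $\varepsilon$ small enough that $\delta^{2}/(1-\varepsilon)<1$ (respectively $\delta^{2}/(1-\varepsilon)^{3}<1$ in the SP case) bounds every tail factor by a fixed $\gamma\in(0,1)$, while the finitely many factors with $k<N$ contribute only a constant $C$; hence each ratio is $\le C\gamma^{\,n-N}\to0$. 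This gives $\lim_{n}\|x_n-u^*\|/\|\omega_n-u^*\|=\lim_{n}\|x_n-u^*\|/\|q_n-u^*\|=0$, i.e. (7) converges faster than (5) and (6) to $u^*$ (convergence of (7) to the unique fixed point being Theorem~1, and convergence of (5), (6) being part of the hypotheses). I expect the one genuinely delicate point to be the lower estimates of the previous paragraph: one must push the reverse triangle inequality through every inner stage while only ever using $\|Tw-Tu^*\|\le\delta\|w-u^*\|$, and invoke (i) at each stage so that no factor $1-a_k^{i}(1+\delta)$ becomes negative — were that to happen, the "lower bound" would be vacuous and the quotient argument would fail.
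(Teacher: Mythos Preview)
Your proposal is correct and follows essentially the same route as the paper: an upper geometric estimate for the Picard--S error, lower estimates for the Noor and SP errors obtained via the reverse triangle inequality together with condition~(i) (which keeps each factor $1-a_k^{i}(1+\delta)$ positive), and then the quotient is driven to zero. The only cosmetic difference is in the last step: the paper packages the quotient as $\theta_n$ and invokes the ratio test (using~(ii) to get $\theta_{n+1}/\theta_n\to\delta^{2}<1$), while you obtain the same conclusion by a direct tail estimate.
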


\begin{proof}
From inequality (20), we have%
\begin{equation}
\left\Vert x_{n+1}-u^{\ast }\right\Vert \leq \delta ^{2\left( n+1\right)
}\left\Vert x_{0}-u^{\ast }\right\Vert ^{n+1}\dprod\nolimits_{k=0}^{n}\left[
1-a_{k}^{1}a_{k}^{2}\left( 1-\delta \right) \right]   \label{eqn43}
\end{equation}%
Using (6) we obtain%
\begin{eqnarray}
\left\Vert q_{n+1}-u^{\ast }\right\Vert  &=&\left\Vert \left(
1-a_{n}^{0}\right) r_{n}+a_{n}^{0}Tr_{n}-u^{\ast }\right\Vert   \label{eqn44}
\\
&\geq &\left( 1-a_{n}^{0}\right) \left\Vert r_{n}-u^{\ast }\right\Vert
-a_{n}^{0}\left\Vert Tr_{n}-Tu^{\ast }\right\Vert   \notag \\
&\geq &\left[ 1-a_{n}^{0}\left( 1+\delta \right) \right] \left\Vert
r_{n}-u^{\ast }\right\Vert   \notag \\
&\geq &\left[ 1-a_{n}^{0}\left( 1+\delta \right) \right] \left\{ \left(
1-a_{n}^{1}\right) \left\Vert s_{n}-u^{\ast }\right\Vert -a_{n}^{1}\delta
\left\Vert s_{n}-u^{\ast }\right\Vert \right\}   \notag \\
&=&\left[ 1-a_{n}^{0}\left( 1+\delta \right) \right] \left[
1-a_{n}^{1}\left( 1+\delta \right) \right] \left\Vert s_{n}-u^{\ast
}\right\Vert   \notag \\
&\geq &\left[ 1-a_{n}^{0}\left( 1+\delta \right) \right] \left[
1-a_{n}^{1}\left( 1+\delta \right) \right] \left\{ \left( 1-a_{n}^{2}\right)
\left\Vert q_{n}-u^{\ast }\right\Vert -a_{n}^{2}\delta \left\Vert
q_{n}-u^{\ast }\right\Vert \right\}   \notag \\
&=&\left[ 1-a_{n}^{0}\left( 1+\delta \right) \right] \left[
1-a_{n}^{1}\left( 1+\delta \right) \right] \left[ 1-a_{n}^{2}\left( 1+\delta
\right) \right] \left\Vert q_{n}-u^{\ast }\right\Vert   \notag \\
&\geq &\cdots   \notag \\
&\geq &\left\Vert q_{0}-u^{\ast }\right\Vert ^{n+1}\prod\limits_{k=0}^{n}
\left[ 1-a_{k}^{0}\left( 1+\delta \right) \right] \left[ 1-a_{k}^{1}\left(
1+\delta \right) \right] \left[ 1-a_{k}^{2}\left( 1+\delta \right) \right] 
\text{.}  \notag
\end{eqnarray}%
Using now (42) and (43)%
\begin{equation}
\frac{\left\Vert x_{n+1}-u^{\ast }\right\Vert }{\left\Vert q_{n+1}-u^{\ast
}\right\Vert }\leq \frac{\delta ^{2\left( n+1\right) }\left\Vert
x_{0}-u^{\ast }\right\Vert ^{n+1}\dprod\nolimits_{k=0}^{n}\left[
1-a_{k}^{1}a_{k}^{2}\left( 1-\delta \right) \right] }{\left\Vert
q_{0}-u^{\ast }\right\Vert ^{n+1}\prod\limits_{k=0}^{n}\left[
1-a_{k}^{0}\left( 1+\delta \right) \right] \left[ 1-a_{k}^{1}\left( 1+\delta
\right) \right] \left[ 1-a_{k}^{2}\left( 1+\delta \right) \right] }\text{.}
\label{eqn45}
\end{equation}%
Define 
\begin{equation}
\theta _{n}=\frac{\delta ^{2\left( n+1\right) }\dprod\nolimits_{k=0}^{n}%
\left[ 1-a_{k}^{1}a_{k}^{2}\left( 1-\delta \right) \right] }{%
\dprod\nolimits_{k=0}^{n}\left[ 1-a_{k}^{0}\left( 1+\delta \right) \right] %
\left[ 1-a_{k}^{1}\left( 1+\delta \right) \right] \left[ 1-a_{k}^{2}\left(
1+\delta \right) \right] }\text{.}  \label{eqn46}
\end{equation}%
By the assumtion%
\begin{eqnarray}
&&\lim_{n\rightarrow \infty }\frac{\theta _{n+1}}{\theta _{n}}  \label{eqn47}
\\
&=&\lim_{n\rightarrow \infty }\frac{\frac{\delta ^{2\left( n+2\right)
}\dprod\nolimits_{k=0}^{n+1}\left[ 1-a_{k}^{1}a_{k}^{2}\left( 1-\delta
\right) \right] }{\dprod\nolimits_{k=0}^{n+1}\left[ 1-a_{k}^{0}\left(
1+\delta \right) \right] \left[ 1-a_{k}^{1}\left( 1+\delta \right) \right] %
\left[ 1-a_{k}^{2}\left( 1+\delta \right) \right] }}{\frac{\delta ^{2\left(
n+1\right) }\dprod\nolimits_{k=0}^{n}\left[ 1-a_{k}^{1}a_{k}^{2}\left(
1-\delta \right) \right] }{\dprod\nolimits_{k=0}^{n}\left[ 1-a_{k}^{0}\left(
1+\delta \right) \right] \left[ 1-a_{k}^{1}\left( 1+\delta \right) \right] %
\left[ 1-a_{k}^{2}\left( 1+\delta \right) \right] }}  \notag \\
&=&\lim_{n\rightarrow \infty }\frac{\delta ^{2}\left[
1-a_{n+1}^{1}a_{n+1}^{2}\left( 1-\delta \right) \right] }{\left[
1-a_{n+1}^{0}\left( 1+\delta \right) \right] \left[ 1-a_{n+1}^{1}\left(
1+\delta \right) \right] \left[ 1-a_{n+1}^{2}\left( 1+\delta \right) \right] 
}  \notag \\
&=&\delta ^{2}<1\text{.}  \notag
\end{eqnarray}%
It thus follows from ratio test that $\sum\limits_{n=0}^{\infty }\theta
_{n}<\infty $. Hence, we have $\lim_{n\rightarrow \infty }\theta _{n}=0$
which implies that the iterative sequence defined by (7) converges faster
than the iterative sequence defined by SP iteration method (6).

Using Noor iteration method (5), we get%
\begin{eqnarray}
\left\Vert \omega _{n+1}-u^{\ast }\right\Vert  &=&\left\Vert \left(
1-a_{n}^{0}\right) \omega _{n}+a_{n}^{0}T\varpi _{n}-u^{\ast }\right\Vert 
\label{eqn48} \\
&\geq &\left( 1-a_{n}^{0}\right) \left\Vert \omega _{n}-u^{\ast }\right\Vert
-a_{n}^{0}\left\Vert T\varpi _{n}-Tu^{\ast }\right\Vert   \notag \\
&\geq &\left( 1-a_{n}^{0}\right) \left\Vert \omega _{n}-u^{\ast }\right\Vert
-a_{n}^{0}\delta \left\Vert \varpi _{n}-u^{\ast }\right\Vert   \notag \\
&\geq &\left[ 1-a_{n}^{0}-a_{n}^{0}\delta \left( 1-a_{n}^{1}\right) \right]
\left\Vert \omega _{n}-u^{\ast }\right\Vert -a_{n}^{0}a_{n}^{1}\delta
^{2}\left\Vert \rho _{n}-u^{\ast }\right\Vert   \notag \\
&\geq &\left\{ 1-a_{n}^{0}-a_{n}^{0}\delta \left( 1-a_{n}^{1}\right)
-a_{n}^{0}a_{n}^{1}\delta ^{2}\left[ 1-a_{n}^{2}\left( 1-\delta \right) %
\right] \right\} \left\Vert \omega _{n}-u^{\ast }\right\Vert   \notag \\
&\geq &\left\{ 1-a_{n}^{0}-a_{n}^{0}\delta \left[ 1-a_{n}^{1}\left( 1-\delta
\right) \right] \right\} \left\Vert \omega _{n}-u^{\ast }\right\Vert   \notag
\\
&\geq &\left[ 1-a_{n}^{0}\left( 1+\delta \right) \right] \left\Vert \omega
_{n}-u^{\ast }\right\Vert   \notag \\
&\geq &\cdots   \notag \\
&\geq &\left\Vert \omega _{0}-u^{\ast }\right\Vert
^{n+1}\prod\limits_{k=0}^{n}\left[ 1-a_{k}^{0}\left( 1+\delta \right) \right]
\text{.}  \notag
\end{eqnarray}%
It follows by (42) and (47) that%
\begin{equation}
\frac{\left\Vert x_{n+1}-u^{\ast }\right\Vert }{\left\Vert \omega
_{n+1}-x_{\ast }\right\Vert }\leq \frac{\delta ^{2\left( n+1\right)
}\left\Vert x_{0}-u^{\ast }\right\Vert ^{n+1}\dprod\nolimits_{k=0}^{n}\left[
1-a_{k}^{1}a_{k}^{2}\left( 1-\delta \right) \right] }{\left\Vert \omega
_{0}-u^{\ast }\right\Vert ^{n+1}\prod\limits_{k=0}^{n}\left[
1-a_{k}^{0}\left( 1+\delta \right) \right] }\text{.}  \label{eqn49}
\end{equation}%
Define 
\begin{equation}
\theta _{n}=\frac{\delta ^{2\left( n+1\right) }\dprod\nolimits_{k=0}^{n}%
\left[ 1-a_{k}^{1}a_{k}^{2}\left( 1-\delta \right) \right] }{%
\prod\limits_{k=0}^{n}\left[ 1-a_{k}^{0}\left( 1+\delta \right) \right] }%
\text{.}  \label{eqn50}
\end{equation}%
By the assumption%
\begin{eqnarray}
\lim_{n\rightarrow \infty }\frac{\theta _{n+1}}{\theta _{n}}
&=&\lim_{n\rightarrow \infty }\frac{\delta ^{2}\left[
1-a_{n+1}^{1}a_{n+1}^{2}\left( 1-\delta \right) \right] }{\left[
1-a_{n+1}^{0}\left( 1+\delta \right) \right] }  \label{eqn51} \\
&=&\delta ^{2}<1\text{.}  \notag
\end{eqnarray}%
It thus follows from ratio test that $\sum\limits_{n=0}^{\infty }\theta
_{n}<\infty $. Hence, we have $\lim_{n\rightarrow \infty }\theta _{n}=0$
which implies that the iterative sequence defined by (7) converges faster
than the iterative sequence defined by Noor iteration method (5).
\end{proof}

By use of the following example due to \cite{QR}, it was shown in (\cite{CR}%
, Example 4.1) that CR iterative method \cite{CR} is faster than all of \
Picard (3), S \cite{S}, Noor (5) and SP (6) iterative methods for a
particular class of \ operators which is included in the class of
weak-contraction mappings satisfying (11). In the following, for the sake of
consistent comparison, we will use the same example as that of \ (\cite{CR},
Example 4.1) in order to compare the rates of convergence between the
Picard-S iterative scheme (7) and the CR iteration method \cite{CR} for
weak-contraction mappings. In the following example, for convenience, we use
the notations $\left( PS_{n}\right) $ and $\left( CR_{n}\right) $ for the
iterative sequences associated to Picard-S (7) and CR \cite{CR} iteration
methods, respectively.

\begin{example}
\cite{QR} Define a mapping $T:\left[ 0,1\right] \rightarrow \left[ 0,1\right]
$ as $Tx=\frac{x}{2}$. Let $a_{n}^{0}=a_{n}^{1}=a_{n}^{2}=0$, for $%
n=1,2,...,24$ and $a_{n}^{0}=a_{n}^{1}=a_{n}^{2}=\frac{4}{\sqrt{n}}$, for
all $n\geq 25$.

It can be seen easily that the mapping $T$ satisfies condition (11) with the
unique fixed point $0\in F_{T}$. Furthermore, it is easy to see that Example
1 satisfies all the conditions of Theorem 1. Indeed, let $x_{0}\neq 0$ be an
initial point for the iterative sequences $\left( PS_{n}\right) $ and $%
\left( CR_{n}\right) $. Utilizing Picard-S (7) and CR \cite{CR} iteration
methods we obtain 
\begin{eqnarray}
PS_{n} &=&\frac{1}{2}\left( \frac{1}{2}-\frac{4}{n}\right) x_{n}  \notag \\
&=&\cdots   \notag \\
&=&\dprod\limits_{k=25}^{n}\left( \frac{1}{4}-\frac{2}{k}\right) x_{0}\text{,%
}  \label{eqn52}
\end{eqnarray}%
\begin{eqnarray}
CR_{n} &=&\left( \frac{1}{2}-\frac{1}{\sqrt{n}}-\frac{4}{n}+\frac{8}{n\sqrt{n%
}}\right) x_{n}  \notag \\
&=&\cdots   \notag \\
&=&\dprod\limits_{k=25}^{n}\left( \frac{1}{2}-\frac{1}{\sqrt{k}}-\frac{4}{k}+%
\frac{8}{k\sqrt{k}}\right) x_{0}\text{.}  \label{eqn53}
\end{eqnarray}%
It follows from (51) and (52) that%
\begin{equation*}
\frac{\left\vert PS_{n}-0\right\vert }{\left\vert CR_{n}-0\right\vert }=%
\frac{\dprod\limits_{k=25}^{n}\left( \frac{1}{4}-\frac{2}{k}\right) x_{0}}{%
\dprod\limits_{k=25}^{n}\left( \frac{1}{2}-\frac{1}{\sqrt{k}}-\frac{4}{k}+%
\frac{8}{k\sqrt{k}}\right) x_{0}}
\end{equation*}%
\begin{equation*}
\text{ \ \ \ \ \ \ \ }=\dprod\limits_{k=25}^{n}\frac{\frac{1}{4}-\frac{2}{k}%
}{\frac{1}{2}-\frac{1}{\sqrt{k}}-\frac{4}{k}+\frac{8}{k\sqrt{k}}}
\end{equation*}%
\begin{equation*}
\text{ \ \ \ \ \ \ \ \ \ \ \ \ \ \ \ \ \ \ }=\dprod\limits_{k=25}^{n}\frac{%
\left( k-8\right) \sqrt{k}}{2\left( k\sqrt{k}-2k-8\sqrt{k}+16\right) }
\end{equation*}%
\begin{eqnarray}
\text{ \ \ \ \ \ \ } &=&\dprod\limits_{k=25}^{n}\frac{\left( k-8\right) 
\sqrt{k}}{2\left( \sqrt{k}-2\right) \left( k-8\right) }  \notag \\
&=&\dprod\limits_{k=25}^{n}\frac{\sqrt{k}}{2\left( \sqrt{k}-2\right) }\text{.%
}  \label{eqn54}
\end{eqnarray}%
For all $k\geq 25$, we have%
\begin{eqnarray}
\frac{\left( k-2\right) \left( \sqrt{k}-4\right) }{4} &>&1  \notag \\
&\Rightarrow &\left( k-2\right) \left( \sqrt{k}-4\right) >4  \notag \\
&\Rightarrow &k\left( \sqrt{k}-4\right) >2\left( \sqrt{k}-2\right)   \notag
\\
&\Rightarrow &\frac{\sqrt{k}-4}{2\left( \sqrt{k}-2\right) }>\frac{1}{k} 
\notag \\
&\Rightarrow &\frac{\sqrt{k}}{2\left( \sqrt{k}-2\right) }<1-\frac{1}{k}\text{%
,}  \label{eqn55}
\end{eqnarray}%
which yields%
\begin{equation}
\frac{\left\vert PS_{n}-0\right\vert }{\left\vert CR_{n}-0\right\vert }%
=\dprod\limits_{k=25}^{n}\frac{\sqrt{k}}{2\left( \sqrt{k}-2\right) }%
<\dprod\limits_{k=25}^{n}\left( 1-\frac{1}{k}\right) =\frac{24}{n}\text{.}
\label{eqn56}
\end{equation}%
Therefore, we have%
\begin{equation}
\lim_{n\rightarrow \infty }\frac{\left\vert PS_{n}-0\right\vert }{\left\vert
CR_{n}-0\right\vert }=0\text{,}  \label{eqn57}
\end{equation}%
which implies that the Picard-S iterative scheme (7) is faster than the CR
iteration method \cite{CR}$.$
\end{example}

Having regard to R. Chugh et al.'s result (\cite{CR}, Example 4.1), L.B.
Ciric et al.'s results \cite{Ciric} and Example 1 above, we conclude that
Picard-S iteration method is faster than all Picard (3), Mann \cite{Mann},
Ishikawa \cite{Ishikawa}, S \cite{S}, Noor (5) and SP (6) iterative methods.

We are now able to establish the following data dependence result.

\begin{theorem}
Let $T$ with fixed point $u^{\ast }\in F_{T}\neq \emptyset $ be as in
Theorem 1 and $\widetilde{T}$ an approximate operator of $T$. Let $\left\{
x_{n}\right\} _{n=0}^{\infty }$ be an iterative sequence generated by (7)
for $T$ and define an iterative sequence $\left\{ \widetilde{x}_{n}\right\}
_{n=0}^{\infty }$ as follows%
\begin{equation}
\left\{ 
\begin{array}{c}
\widetilde{x}_{0}\in D\text{, \ \ \ \ \ \ \ \ \ \ \ \ \ \ \ \ \ \ \ \ \ \ \
\ \ \ \ \ \ \ \ \ \ \ \ \ \ \ \ \ } \\ 
\widetilde{x}_{n+1}=\widetilde{T}\widetilde{y}_{n}\text{, \ \ \ \ \ \ \ \ \
\ \ \ \ \ \ \ \ \ \ \ \ \ \ \ \ \ \ \ \ \ \ \ \ \ } \\ 
\widetilde{y}_{n}=\left( 1-a_{n}^{1}\right) \widetilde{T}\widetilde{x}%
_{n}+a_{n}^{1}\widetilde{T}\widetilde{z}_{n}\text{, \ \ \ \ \ \ } \\ 
\widetilde{z}_{n}=\left( 1-a_{n}^{2}\right) \widetilde{x}_{n}+a_{n}^{2}%
\widetilde{T}\widetilde{x}_{n}\text{, }n\in 
\mathbb{N}
\text{,}%
\end{array}%
\right.   \label{eqn58}
\end{equation}%
where $\left\{ a_{n}^{i}\right\} _{n=0}^{\infty }$, $i\in \left\{
1,2\right\} $ be real sequences in $\left[ 0,1\right] $ satisfying (i) $%
\frac{1}{2}\leq a_{n}^{1}a_{n}^{2}$ for all $n\in 
\mathbb{N}
$, and (ii) $\sum\limits_{n=0}^{\infty }a_{n}^{1}a_{n}^{2}=\infty $. If $%
\widetilde{T}\widetilde{u}^{\ast }=\widetilde{u}^{\ast }$ such that $%
\widetilde{x}_{n}\rightarrow \widetilde{u}^{\ast }$ as $n\rightarrow \infty $%
, then we have%
\begin{equation}
\left\Vert u^{\ast }-\widetilde{u}^{\ast }\right\Vert \leq \frac{%
5\varepsilon }{1-\delta }\text{,}  \label{eqn59}
\end{equation}%
where $\varepsilon >0$ is a fixed number.
\end{theorem}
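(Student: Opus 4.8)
The plan is to control $\beta_{n}:=\|x_{n}-\widetilde{x}_{n}\|$ by a recursive inequality of the type handled by Lemma 2 and then pass to the limit. Two preliminary facts come for free. First, condition (ii) is precisely the hypothesis of Theorem 1, so the sequence $\{x_{n}\}$ defined by (7) converges to $u^{\ast}$. Second, exactly as in the proof of Theorem 2, $x_{n}\to u^{\ast}$ together with $Tu^{\ast}=u^{\ast}$ forces $\|x_{n}-Tx_{n}\|\to 0$, $\|y_{n}-Ty_{n}\|\to 0$ and $\|z_{n}-Tz_{n}\|\to 0$; these three quantities will be the vanishing part of the error term. The hypothesis $\widetilde{x}_{n}\to\widetilde{u}^{\ast}$ will be used only in the final line.

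First I would estimate $\|z_{n}-\widetilde{z}_{n}\|$, then $\|y_{n}-\widetilde{y}_{n}\|$, and finally $\beta_{n+1}=\|Ty_{n}-\widetilde{T}\widetilde{y}_{n}\|$, working outward through the definitions (7) and (58). The one delicate point is how to split each mixed term $\|T(\cdot)-\widetilde{T}(\cdot)\|$: insert $T$ on the second argument, absorbing an $\varepsilon$ via (12), and then apply (11) with the argument carrying the $T$-iteration index (that is $x_{n}$, $z_{n}$ or $y_{n}$) in the role of the point ``$y$''. The residual then becomes $L\|x_{n}-Tx_{n}\|$, $L\|z_{n}-Tz_{n}\|$ or $L\|y_{n}-Ty_{n}\|$, each tending to $0$, rather than an $L\|\widetilde{x}_{n}-T\widetilde{x}_{n}\|$-type term, which need not vanish. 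Propagating these estimates and observing that the leading coefficients telescope just as in the derivation of (19), one reaches
\[
\|x_{n+1}-\widetilde{x}_{n+1}\|\le \delta^{2}\big[1-a_{n}^{1}a_{n}^{2}(1-\delta)\big]\,\|x_{n}-\widetilde{x}_{n}\|+\rho_{n},
\]
where $\rho_{n}$ is a nonnegative combination, with coefficients bounded by absolute constants, of $\|x_{n}-Tx_{n}\|$, $\|z_{n}-Tz_{n}\|$, $\|y_{n}-Ty_{n}\|$, plus an $\varepsilon$-term whose coefficient has the shape $(1+\delta)+a_{n}^{1}a_{n}^{2}\delta^{2}$.

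Since $\delta\in(0,1)$ and $1-a_{n}^{1}a_{n}^{2}(1-\delta)>0$, one may enlarge $\delta^{2}\big[1-a_{n}^{1}a_{n}^{2}(1-\delta)\big]$ to $1-a_{n}^{1}a_{n}^{2}(1-\delta)$, so that with $\mu_{n}:=a_{n}^{1}a_{n}^{2}(1-\delta)$ and $\gamma_{n}:=\rho_{n}/\mu_{n}$ the inequality reads $\beta_{n+1}\le(1-\mu_{n})\beta_{n}+\mu_{n}\gamma_{n}$. Condition (i) forces $a_{n}^{1},a_{n}^{2}\in[\tfrac12,1]$, hence $\mu_{n}\in(0,1)$, and condition (ii) gives $\sum_{n}\mu_{n}=(1-\delta)\sum_{n}a_{n}^{1}a_{n}^{2}=\infty$, so Lemma 2 applies. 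The same bound $a_{n}^{1}a_{n}^{2}\ge\tfrac12$ (and $a_{n}^{2}\ge\tfrac12$) is what tames $\gamma_{n}$: after dividing $\rho_{n}$ by $\mu_{n}$, the three $\|\cdot-T\cdot\|$-terms still tend to $0$, while the $\varepsilon$-part contributes at most $\dfrac{2(1+\delta)\varepsilon}{1-\delta}+\dfrac{\delta^{2}\varepsilon}{1-\delta}=\dfrac{(2+2\delta+\delta^{2})\varepsilon}{1-\delta}<\dfrac{5\varepsilon}{1-\delta}$, the factor $2$ attaching only to the $a_{n}^{1}a_{n}^{2}$-free part. Hence $\limsup_{n}\gamma_{n}\le\dfrac{5\varepsilon}{1-\delta}$, and Lemma 2 gives $\limsup_{n}\|x_{n}-\widetilde{x}_{n}\|\le\dfrac{5\varepsilon}{1-\delta}$.

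To close, write $\|u^{\ast}-\widetilde{u}^{\ast}\|\le\|u^{\ast}-x_{n}\|+\|x_{n}-\widetilde{x}_{n}\|+\|\widetilde{x}_{n}-\widetilde{u}^{\ast}\|$ and take $\limsup$ as $n\to\infty$; using $x_{n}\to u^{\ast}$ and $\widetilde{x}_{n}\to\widetilde{u}^{\ast}$ yields $\|u^{\ast}-\widetilde{u}^{\ast}\|\le\dfrac{5\varepsilon}{1-\delta}$. I expect the main obstacle to lie in the middle step: arranging the bookkeeping so that every residual $L$-term lands in the ``good'' form and therefore vanishes, and then verifying that after division by $\mu_{n}\ge(1-\delta)/2$ the accumulated $\varepsilon$-coefficient genuinely stays below $5$; the remaining manipulations parallel the proofs of Theorems 1 and 2.
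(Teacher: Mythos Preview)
Your proposal is correct and follows essentially the same route as the paper: estimate $\|z_{n}-\widetilde z_{n}\|$, $\|y_{n}-\widetilde y_{n}\|$, $\|x_{n+1}-\widetilde x_{n+1}\|$ by inserting $T$ and using (11)--(12) with $x_{n},z_{n},y_{n}$ in the ``$y$''-slot, obtain the recursion with leading factor $\delta^{2}[1-a_{n}^{1}a_{n}^{2}(1-\delta)]$, relax $\delta^{2}$ to $1$, and apply Lemma~2. The only cosmetic difference is in the $\varepsilon$-bookkeeping: the paper bounds each of the four $\varepsilon$-contributions separately by a multiple of $a_{n}^{1}a_{n}^{2}\varepsilon$ (using $1-a_{n}^{1}\le a_{n}^{1}a_{n}^{2}$ and $1\le 2a_{n}^{1}a_{n}^{2}$) to get exactly $5a_{n}^{1}a_{n}^{2}\varepsilon$ before dividing, whereas you first collapse $(1-a_{n}^{1})\delta+a_{n}^{1}\delta=\delta$ to reach the coefficient $(1+\delta)+a_{n}^{1}a_{n}^{2}\delta^{2}$ and only then divide by $\mu_{n}\ge(1-\delta)/2$, yielding the slightly sharper $(2+2\delta+\delta^{2})\varepsilon/(1-\delta)<5\varepsilon/(1-\delta)$; your explicit final triangle-inequality step is also a welcome clarification of what the paper leaves implicit.
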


\begin{proof}
It follows from (7), (11), (12), and (57) that%
\begin{eqnarray}
\left\Vert z_{n}-\widetilde{z}_{n}\right\Vert &\leq &\left(
1-a_{n}^{2}\right) \left\Vert x_{n}-\widetilde{x}_{n}\right\Vert
+a_{n}^{2}\left\Vert Tx_{n}-\widetilde{T}\widetilde{x}_{n}\right\Vert  \notag
\\
&\leq &\left[ 1-a_{n}^{2}+a_{n}^{2}\delta \right] \left\Vert x_{n}-%
\widetilde{x}_{n}\right\Vert +a_{n}^{2}L\left\Vert x_{n}-Tx_{n}\right\Vert
+a_{n}^{2}\varepsilon \text{,}  \label{eqn60}
\end{eqnarray}%
\begin{eqnarray}
\left\Vert y_{n}-\widetilde{y}_{n}\right\Vert &\leq &\left(
1-a_{n}^{1}\right) \delta \left\Vert x_{n}-\widetilde{x}_{n}\right\Vert
+a_{n}^{1}\delta \left\Vert z_{n}-\widetilde{z}_{n}\right\Vert  \notag \\
&&+\left( 1-a_{n}^{1}\right) L\left\Vert x_{n}-Tx_{n}\right\Vert
+a_{n}^{1}L\left\Vert z_{n}-Tz_{n}\right\Vert  \notag \\
&&+\left( 1-a_{n}^{1}\right) \varepsilon +a_{n}^{1}\varepsilon \text{,}
\label{eqn61}
\end{eqnarray}%
\begin{equation}
\left\Vert x_{n+1}-\widetilde{x}_{n+1}\right\Vert \leq \delta \left\Vert
y_{n}-\widetilde{y}_{n}\right\Vert +L\left\Vert y_{n}-Ty_{n}\right\Vert
+\varepsilon \text{.}  \label{eqn62}
\end{equation}%
From the relations (59), (60), and (61)%
\begin{eqnarray}
\left\Vert x_{n+1}-\widetilde{x}_{n+1}\right\Vert &\leq &\delta ^{2}\left[
1-a_{n}^{1}a_{n}^{2}\left( 1-\delta \right) \right] \left\Vert x_{n}-%
\widetilde{x}_{n}\right\Vert  \notag \\
&&+\left\{ a_{n}^{1}a_{n}^{2}\delta ^{2}L+\left( 1-a_{n}^{1}\right) \delta
L\right\} \left\Vert x_{n}-Tx_{n}\right\Vert  \notag \\
&&+L\left\Vert y_{n}-Ty_{n}\right\Vert +a_{n}^{1}\delta L\left\Vert
z_{n}-Tz_{n}\right\Vert  \notag \\
&&+a_{n}^{1}a_{n}^{2}\delta ^{2}\varepsilon +\left( 1-a_{n}^{1}\right)
\delta \varepsilon +a_{n}^{1}\delta \varepsilon +\varepsilon \text{.}
\label{eqn63}
\end{eqnarray}%
Since $a_{n}^{1}$, $a_{n}^{2}\in \left[ 0,1\right] $ $\ $and $\frac{1}{2}%
\leq a_{n}^{1}a_{n}^{2}$ for all $n\in 
\mathbb{N}
$%
\begin{equation}
1-a_{n}^{1}a_{n}^{2}\leq a_{n}^{1}a_{n}^{2}\text{,}  \label{eqn64}
\end{equation}%
\begin{equation}
1-a_{n}^{1}\leq 1-a_{n}^{1}a_{n}^{2}\leq a_{n}^{1}a_{n}^{2}\text{,}
\label{eqn65}
\end{equation}%
\begin{equation}
1\leq 2a_{n}^{1}a_{n}^{2}\text{.}  \label{eqn66}
\end{equation}%
Use of the facts $\delta $, $\delta ^{2}\in \left( 0,1\right) $, (63), (64),
and (65) in (62) yields 
\begin{eqnarray}
\left\Vert x_{n+1}-\widetilde{x}_{n+1}\right\Vert &\leq &\left[
1-a_{n}^{1}a_{n}^{2}\left( 1-\delta \right) \right] \left\Vert x_{n}-%
\widetilde{x}_{n}\right\Vert  \notag \\
&&+a_{n}^{1}a_{n}^{2}\left( 1-\delta \right) \left\{ \frac{L\delta \left(
1+\delta \right) \left\Vert x_{n}-Tx_{n}\right\Vert }{1-\delta }\right. 
\notag \\
&&\left. +\frac{2L\left\Vert y_{n}-Ty_{n}\right\Vert +2\delta L\left\Vert
z_{n}-Tz_{n}\right\Vert +5\varepsilon }{1-\delta }\right\} \text{.}
\label{eqn67}
\end{eqnarray}%
Define%
\begin{eqnarray}
\beta _{n} &:&=\left\Vert x_{n}-\widetilde{x}_{n}\right\Vert \text{,}
\label{eqn68} \\
\mu _{n} &:&=a_{n}^{1}a_{n}^{2}\left( 1-\delta \right) \in \left( 0,1\right) 
\text{,}  \notag \\
\gamma _{n} &:&=\frac{L\delta \left( 1+\delta \right) \left\Vert
x_{n}-Tx_{n}\right\Vert +2L\left\Vert y_{n}-Ty_{n}\right\Vert +2\delta
L\left\Vert z_{n}-Tz_{n}\right\Vert +5\varepsilon }{1-\delta }\geq 0\text{.}
\notag
\end{eqnarray}%
Hence, the inequality (66) perform all assumptions in Lemma 2 and thus an
application of Lemma 2 to (66) yields 
\begin{eqnarray}
0 &\leq &\lim \sup_{n\rightarrow \infty }\left\Vert x_{n}-\widetilde{x}%
_{n}\right\Vert  \label{eqn69} \\
&\leq &\lim \sup_{n\rightarrow \infty }\frac{L\delta \left( 1+\delta \right)
\left\Vert x_{n}-Tx_{n}\right\Vert +2L\left\Vert y_{n}-Ty_{n}\right\Vert
+2\delta L\left\Vert z_{n}-Tz_{n}\right\Vert +5\varepsilon }{1-\delta }. 
\notag
\end{eqnarray}%
We know from Theorem 1 that $\lim_{n\rightarrow \infty }x_{n}=u^{\ast }$ and
since $Tu^{\ast }=u^{\ast }$%
\begin{equation}
\lim_{n\rightarrow \infty }\left\Vert x_{n}-Tx_{n}\right\Vert
=\lim_{n\rightarrow \infty }\left\Vert y_{n}-Ty_{n}\right\Vert
=\lim_{n\rightarrow \infty }\left\Vert z_{n}-Tz_{n}\right\Vert =0\text{.}
\label{eqn70}
\end{equation}%
Therefore the inequality (68) becomes 
\begin{equation}
\left\Vert u^{\ast }-\widetilde{u}^{\ast }\right\Vert \leq \frac{%
5\varepsilon }{1-\delta }.  \label{eqn71}
\end{equation}
\end{proof}

\end{document}